\documentclass{article}

\usepackage[T1]{fontenc}
\usepackage[utf8]{inputenc}
\usepackage[italian,english]{babel}

\DeclareUnicodeCharacter{2212}{-}

\usepackage{geometry}
\usepackage{emptypage}	
\usepackage{graphics}
\usepackage{graphicx}
\usepackage{amsmath}
\usepackage{amssymb}
\usepackage{amsthm}
\usepackage{graphicx}
\usepackage{wrapfig}
\usepackage{pdfpages}
\usepackage{braket}
\usepackage{enumerate}
\usepackage{hyperref}
\usepackage{mathtools}
\usepackage{subcaption}

\theoremstyle{plain}
\newtheorem{thm}{Theorem}[section]
\newtheorem{thm*}{Theorem}

\newtheorem{prop}[thm]{Proposition}

\theoremstyle{definition}
\newtheorem{dfn}[thm]{Definition}

\newtheorem{rmk}[thm]{Remark}

\newcommand{\R}{\mathbb{R}}

\newcommand{\s}{\mathcal{S}}
\newcommand{\abs}[1]{\lvert #1 \rvert}
\newcommand{\defeq}{\coloneqq}

\title{Relative equilibria, linear stability and electromagnetic curvature}
\author{
Luca Asselle and Giorgia Testolina\\[3pt]
\textit{Ruhr-Universität Bochum, Fakultät für Mathematik}
}

\date{\today}

\begin{document}

\maketitle

\vspace{-5mm}

\begin{abstract}
 \noindent In this paper we study the linear stability of relative equilibria in the Newtonian $n$-body problem from the viewpoint of electromagnetic systems. We first examine the effect of the ambient dimension on stability, starting from the Lagrange equilateral triangle solutions of the three-body problem in $\R^4$. We then initiate a new approach to stability based on electromagnetic curvature. In a two-dimensional model, we relate linear stability to both the Ma\~n\'e critical value and to the behavior of the zero set of the electromagnetic curvature, highlighting a change in its topology at the stability threshold. This criterion is then applied to the planar $n$-body problem: in the three-body case, we recover Routh's classical criterion, and, more generally, we obtain an instability criterion for relative equilibria whose reduced linearized dynamics splits along invariant symplectic planes. These results suggest a new geometric perspective on linear stability and on questions related to Moeckel's conjecture.
\end{abstract}

\section{Introduction}

Relative equilibria (RE) are among the most fundamental special solutions of the Newtonian $n$-body problem. They are motions for which the configuration rotates rigidly while preserving its shape, and (in dimension $\leq 3$) they arise from \textit{central configurations} (CC). 
Understanding how the dynamical stability of a RE is encoded in the variational properties of the CC generating it has been for decades one of the central problems in celestial mechanics. In the planar case, this question is embodied in two major conjectures: \textit{Moeckel's conjecture}, stating that CC generating linearly stable RE must be local minimizers of the Newtonian potential restricted to the inertia ellipsoid, and the \textit{dominant mass conjecture}, asserting roughly speaking that linearly stable RE can only arise if one of the masses is much larger than the others. 

Over the years, several important results have supported these conjectures. For planar relative equilibria, Routh's classical criterion completely describes the stability of the Lagrange equilateral triangle solution in the 3-body problem. More generally, Hu and Sun proved in \cite{Hu} that if the Morse index or the nullity of a planar CC is odd, then the corresponding RE is linearly unstable. This result was revisited and extended in \cite{Barutello} and later by the first author with Portaluri and Wu \cite{APW}, who obtained sufficient conditions for spectral instability of planar and non-planar RE and developed a symplectic decomposition of the linearized dynamics inspired by previous work of Meyer and Schmidt \cite{meyer-schimdt}. 
Regarding the dominant mass conjecture, Moeckel provided a criterion for linear stability when $n-1$ masses are small \cite{moeckel} and, more recently, Roberts in \cite{roberts} confirmed the validity of the conjecture for RE of the 4-body problem generated by kite CC. 
These advances, while significant, do not settle either conjecture, except in a few special cases. In particular, the spectral-flow approach of \cite{Hu,Barutello,APW} detects instability through parity information on Morse indices and nullities, and therefore appears to be intrinsically limited when one seeks to make progress toward Moeckel's conjecture.
In the present paper, we take first steps in two directions that, to the best of our knowledge, have not yet been explored in the literature.

First, we investigate the effect of the ambient dimension on the stability of RE. We begin with the simplest possible class of examples, namely the RE arising from Lagrange equilateral triangle CC for the 3-body problem in $\R^4$. Even in this elementary setting, varying the inclination of the generating CC produces a nontrivial stability diagram. More precisely, our numerical investigation (see Section 3) suggests that, for every choice of masses, the inclination angles for which the RE corresponding to a Lagrange equilateral triangle CC is linearly stable form an interval containing $[\pi/3,\pi/2]$. Moreover, this interval enlarges as the mass ratio approaches Routh's threshold, and the corresponding RE appears to be linearly stable for every inclination angle whenever Routh's criterion is satisfied. Although this part of the analysis is based on numerical computations rather than on a computer-assisted proof in the rigorous sense, it provides a clear picture of how strongly the ambient dimension can influence the spectrum of the linearized Hamiltonian system.  Here, we should also mention the related work of Albouy and Dullin \cite{albouydullin,albouy2dullin}. In particular, \cite{albouydullin} proves the existence of Lyapunov stable RE for the 3-body problem in $\mathbb R^4$.

Second, we initiate a new approach to the linear stability problem based on \textit{electromagnetic curvature}. After passing to rotating coordinates, RE become equilibrium points of an autonomous Hamiltonian system with an additional magnetic term encoding the Coriolis force. This places the stability problem within the framework of electromagnetic dynamics. In recent work \cite{assenza_testolina}, electromagnetic curvature was introduced as a natural geometric quantity for the study of the dynamical properties of electromagnetic systems in the low-energy regime. The main idea of the present paper is that this curvature can also encode relevant information about the linear stability of equilibrium points of electromagnetic systems, and hence, in particular, of RE in the $n$-body problem.
The core of our analysis is a two-dimensional model given by a linear electromagnetic system with quadratic potential. In Theorem~\ref{prop:equivalenceplane} we prove that, in this model, linear stability of the equilibrium is equivalent to a strict inequality between the Ma\~n\'e critical value and the maximal energy on the zero section, and also equivalent to a precise topological description of the zero set of the electromagnetic curvature near the equilibrium. This yields a geometric stability test that is both conceptually transparent and easy to evaluate.
We then apply this result to the planar $n$-body problem. In Proposition~\ref{prop:lagrange}, we show that, for the Lagrange equilateral triangle solution of the planar 3-body problem, the two-dimensional electromagnetic criterion exactly recovers Routh's classical stability condition. We then analyze the square and rhombus CC in the 4-body problem and, motivated by these examples, formulate in Proposition~\ref{prop:stabilitycriterion} a general instability criterion for those RE whose reduced linearized Hamiltonian splits into independent blocks on invariant symplectic planes. Although this splitting property is rather restrictive, the curvature point of view appears flexible enough to remain meaningful beyond the fully decoupled setting. This suggests a possible new strategy toward Moeckel-type conjectures: rather than extracting instability solely from parity information, one may try to infer stability or instability directly from geometric features of the reduced electromagnetic system (see the end of Section~5 for further details).

%The paper is organized as follows. In Section~2, we recall the basic material on CC, RE, and rotating coordinates. Section~3 is devoted to the stability of Lagrange RE in $\R^4$. In Section~4, we briefly introduce electromagnetic curvature and establish the two-dimensional criterion relating curvature and stability. Finally, in Section~5, we apply this criterion to planar RE.

\vspace{2mm}

\noindent \textbf{Acknowledgments:} The authors are partially supported by the DFG-grant 566804407 ``Symplectic dynamics, celestial mechanics and magnetism''.

\section{Preliminaries} \label{sec_2}

Consider $n$ point masses $m_1, \dots, m_n > 0$ with positions 
$q_1, \dots, q_n \in \R^d$, $d \geq 2$, moving under Newton's law of gravitation
\begin{equation}
\label{eqn_Newton}
\ddot{q} = M^{-1} \nabla U(q),
\end{equation}
where $q \defeq (q_1, \dots, q_n) \in \R^{dn}$ and 
$M \defeq \mathrm{diag}(m_1 \mathbb{I}_d, \dots, m_n \mathbb{I}_d)$ is the mass matrix. 
We restrict our attention to the collision-free configuration space with center of mass at the origin,
\[
\mathcal{C} = \{ q \in \R^{dn} \mid \bar{q} = 0,\; q_i \neq q_j \text{ for } i \neq j \},
\]
where 
$$\bar{q} := \frac{1}{\bar m}\sum_{i=1}^n m_i q_i, \quad \bar m := \sum_i m_i,$$ denotes the center of mass.

\begin{dfn} 
\label{dfn_CC}
A \textit{central configuration} (CC) is an arrangement $q$ of the masses such that
\begin{equation}
\label{eqn_CC_1}
M^{-1} \nabla U(q) + \lambda q = 0,
\end{equation}
where $\lambda > 0$ is a positive constant.
\end{dfn}

CC play a fundamental role in the study of the $n$-body problem for many reasons: for instance, they govern the behavior of colliding solutions near collisions, they mark changes in the topology of integral manifolds, and they give rise to explicit classes of periodic solutions. In low dimension, that is when $d \leq 3$, every CC generates a homothetic motion, in which the configuration collapses toward or expands from the center of mass while preserving its shape.  
Moreover, every \textit{planar} CC generates homographic motions, in which the configuration evolves only through scaling and rotation, remaining similar to its initial shape while each body follows a Keplerian orbit. As a special case, one retrieves \textit{relative equilibria} (RE), for which the configuration rigidly rotates about the center of mass with constant angular velocity and each body moves along a circular orbit.
In dimensions $d \geq 4$, the richer structure of the orthogonal group allows (possibly non-planar) CC to generate RE which are no longer confined to a single plane, see e.g. \cite{APW}. 

We introduce the mass scalar product $\langle \cdot, \cdot \rangle_M$ as
\[
\langle \cdot, \cdot \rangle_M \defeq \langle M \cdot, \cdot \rangle.
\]
Observing that $U(q)$ is $-1$-homogeneous and applying Euler's theorem for homogeneous functions, we are able to determine the value of the constant $\lambda$ in equation~\eqref{eqn_CC_1}:
\[
\lambda = \frac{U(q)}{\abs{q}^2_M}.
\]
From Definition~\ref{dfn_CC} we see that, if $q$ is a CC, then $k q$ is also a CC for any $k > 0$. Therefore, we can restrict our attention to the collision free configuration sphere
\[
\s \defeq \{ q \in \mathcal{C} \mid \abs{q}^2_M = 1 \}.
\]
The manifold $\s$ is an open subset of a smooth compact manifold diffeomorphic to a $d(n - 1)- 1$-dimensional sphere. Moreover, since in $\s$ we have $\lambda = U(q)$, Equation~\eqref{eqn_CC_1} becomes
\begin{equation}
\label{eqn_CC}
M^{-1} \nabla U(q) + U(q) q = 0.
\end{equation}
A fundamental feature of CC is their variational characterization as critical points of the Newtonian potential restricted to $\s$, denoted $U|_\s$. Since $U|_{\s}$ is invariant under the action of $SO(d)$, CC are never isolated critical points but instead occur in $SO(d)$-families. 
The Hessian of the restriction $U|_\s : \s \rightarrow \R$ at a critical point $q$ is the quadratic form on $T_q \s$ represented, with respect to the mass scalar product, by the $(dn \times dn)$-matrix
\[
H(q) = M^{-1} D^2 U(q) +  U(q), 
\]
where the $(i, j)$-blocks of $D^2 U(q)$ are given by 
\begin{align*}
D_{ij} & = \frac{m_i m_j}{r^3_{ij}} (\mathbb{I}_d - 3 u_{ij} u^t_{ij}) \quad \text{for} \, i \neq j \\
D_{ii} & = - \sum_{j \neq i} D_{ij}
\end{align*}
with $r_{ij} \defeq \abs{q_i - q_j} $ and $u_{ij} \defeq \frac{q_i - q_j}{\abs{q_i - q_j}}$. 

\subsection{Rotating frame for relative equilibria} \label{sec_2.1}

RE become equilibrium points when viewed in a uniformly rotating frame.
Their stability can therefore be investigated by studying the linearization of the equations of motion in rotating coordinates.
For this purpose it is convenient to adopt the Hamiltonian formulation of the $n$-body problem.
The equations of motion~\eqref{eqn_Newton} arise as the Euler-Lagrange equations associated with the Lagrangian $L: T\mathcal{C} \to \R$
\[
L(q,\dot q)=\tfrac12\lvert\dot q\rvert_M^2+U(q).
\]
Introducing the canonical momenta
\[
p_i \defeq \frac{\partial L}{\partial \dot q_i}=m_i\dot q_i,
\]
the Legendre transform yields the Hamiltonian
$H:T^*\mathcal{C}\to\R$
\[
H(q,p)=\tfrac12\lvert p\rvert_{M^{-1}}^2-U(q),
\]
and the dynamics is equivalently described by Hamilton's equations
\begin{equation}
\dot q=M^{-1}p, \qquad \dot p=\nabla U(q).
\end{equation}
Consider a uniformly rotating frame generated by
$R(t)=e^{tK}\in SO(d)$, where $K\in\mathfrak{so}(d)$ is a constant angular velocity matrix.
Introducing rotating coordinates
\[
Q=R(t)q, \qquad P=R(t)p,
\]
a direct computation shows that the Hamiltonian becomes
\begin{equation}\label{eqn_ham_rot}
H(Q,P)
=\tfrac12\lvert P\rvert_{M^{-1}}^2
+\langle KQ, P\rangle
-U(Q),
\end{equation}
and Hamilton's equations take the form
\begin{equation}\label{eqn_hamilton}
\dot Q=M^{-1}P+K Q,
\qquad
\dot P=\nabla U(Q)+K P.
\end{equation}
The additional terms encode the Coriolis and centrifugal effects
induced by the rotating frame and yields a Hamiltonian of electromagnetic type (see Section~\ref{sec_4} for more details). Indeed,~\eqref{eqn_ham_rot} can be rewritten as 
\[
H(Q, P) = \tfrac12 \lvert P - \vartheta_Q \rvert_{M^{-1}}^2 + V(Q)
\]
where
\[
\vartheta_Q \defeq -MKQ
\]
plays the role of a magnetic potential, and
\[
V(Q) \defeq - U(Q) + \tfrac12  \lvert KQ \rvert_M^2
\]
is the corresponding effective potential. RE of the form
\[
q(t)=R(t) q,
\]
with $q$ a CC, correspond to equilibrium points of the rotating system precisely when the angular velocity matrix $K$ satisfies the condition
\[
-K^2 q =\lambda q,
\]
where $\lambda$ is the constant appearing in the CC equation~\eqref{eqn_CC}. 
Linearizing system~\eqref{eqn_hamilton} at an equilibrium $z = (Q,P)$ yields the linear Hamiltonian system
\begin{equation}\label{linear_system}
\dot z = L z = -J B z,  
\end{equation}
where
\[
L=
\begin{pmatrix}
K & M^{-1}\\
D^2U(Q) & K
\end{pmatrix},
\qquad
B=
\begin{pmatrix}
- D^2U(Q) & K^T\\
K & M^{-1}
\end{pmatrix},
\]
and $J$ denotes the standard symplectic matrix on $\R^{2dn}$.
Because of the symmetries of the problem, the phase space admits a decomposition into invariant
symplectic subspaces 
$E_1 \oplus E_2 \oplus E_3$,
where $E_1$ and $E_2$ correspond to the translational and rotational symmetries, while $E_3$ contains the dynamically
nontrivial directions. We therefore define a RE to be \emph{linearly stable} if the spectrum of the linearized system,
restricted to $E_3$, is purely imaginary and the matrix is diagonalizable.

\section{Linear stability in $\R^4$} \label{sec_3}

In this section, we investigate the effect of the ambient dimension on the stability of RE, beginning with the simplest possible class of examples, namely the RE arising from Lagrange equilateral triangle CC for the 3-body problem in $\R^4$. We first recall briefly the properties of the rotation group in $\R^4$ and the symplectic decomposition of the phase space for RE, referring to~\cite{APW} for a more comprehensive explanation. 

\subsection{Rotating frame for relative equilibria in $\R^4$} \label{sec_3.1}

The orthogonal group in four dimensions exhibits a much richer structure than in dimension two and three, allowing for instance rotations in two orthogonal planes with possibly different angular velocities. 
Any rotation $R \in \mathrm{SO}(4)$ fixes the origin and, after a suitable choice of orthonormal coordinates, can be written in block--diagonal form as
\[
R = \begin{pmatrix}
R(\varphi_1) & 0 \\
0 & R(\varphi_2)
\end{pmatrix},
\]
where each block $R(\varphi)$ is a $2\times 2$ planar rotation matrix
\[
R(\varphi) = 
\begin{pmatrix}
\cos \varphi & -\sin \varphi \\
\sin \varphi & \cos \varphi
\end{pmatrix} = e^{\varphi J},
\qquad
J=
\begin{pmatrix}
0 & -1\\
1 & 0
\end{pmatrix}.
\]
We distinguish the following types of rotations:
\begin{itemize}
    \item A \emph{simple rotation}, corresponding to the case in which one of the angles vanishes. The motion is then confined to a single invariant plane, while the orthogonal plane remains fixed.
    
    \item A \emph{double rotation}, occurring when both angles are nonzero. In this case the origin is the only fixed point, and the two orthogonal planes are invariant.
    A special subclass of double rotations is given by the \emph{isoclinic rotations}, for which the two rotation angles have equal absolute value. These are called \emph{left--isoclinic} when $\varphi_1 = -\varphi_2$ and \emph{right--isoclinic} when $\varphi_1 = \varphi_2$. In either case, there exist infinitely many invariant planes.
\end{itemize}

\noindent
As a consequence, the choice of uniformly rotating coordinates used to describe a relative equilibrium depends on the geometry of the underlying configuration. 
In particular, if all the masses lie entirely in one of the principal rotational planes -- say in the plane $\R^2 \times \{0\} $ -- we consider the simple rotation with angular velocity $k\defeq\sqrt{\lambda}$, where $\lambda$ is given by Equation~\eqref{eqn_CC}:
\[
r_s(t)=
\begin{pmatrix}
e^{ktJ} & 0\\
0 & I_2
\end{pmatrix}
= \text{exp}(k_s t),
\qquad
k_s=
\begin{pmatrix}
kJ & 0\\
0 & 0
\end{pmatrix}\in\mathfrak{so}(4).
\]
We denote by $R_s(t)$ and $K_s$ the $4n\times4n$ block-diagonal matrices whose diagonal entries are respectively the $4\times4$ matrices $r_s(t)$ and $k_s$.
If instead the masses are not contained in one of the principal rotation planes we define the left-isoclinic rotation
\[
r_d(t)=
\begin{pmatrix}
e^{ktJ} & 0\\
0 & e^{-ktJ}
\end{pmatrix}
= \text{exp}(k_d t),
\qquad
k_d=
\begin{pmatrix}
kJ & 0\\
0 & -kJ
\end{pmatrix}\in\mathfrak{so}(4),
\]
and denote by $R_d(t)$ and $K_d$ the corresponding
$4n\times4n$ matrices. 

\subsection{Symplectic decomposition of the phase space} \label{sec_3.2}

The symmetries of the $n$-body problem induce a symplectic decomposition of the phase space $\R^{8n}$ of the linearized system~\eqref{linear_system} into three invariant symplectic subspaces
\[
E_1 \oplus E_2 \oplus E_3.
\]
The subspace $E_1$ corresponds to translational invariance and is generated by motions of the center of mass together with their conjugate momenta.
The subspace $E_2$ is associated with rotational symmetry. It is spanned by infinitesimal rotations of the configuration together with their conjugate momenta. Its structure depends on the type of central configuration and on whether the corresponding relative equilibrium is planar or non-planar. More precisely, if the central configuration is planar and non-collinear and the corresponding relative equilibrium is planar
(i.e. the configuration is contained in a plane and the motion happens inside that plane), $E_2$ has dimension 12 and is generated by the six independent simple rotations of $\R^4$, together with their conjugate momenta. If instead the motion is non-planar, or if the central configuration is collinear, the rotational symmetry reduces to simultaneous rotations in orthogonal planes. In this case $E_2$ has dimension 8 and is generated by the simultaneous rotations in two orthogonal planes and their conjugate momenta.
Finally, $E_3$ is defined as the symplectic complement of $E_1\oplus E_2$ and contains all the dynamically relevant information.

Starting from this decomposition, it is possible to construct a symplectic change of coordinates which allows one to split the original Hamiltonian system into three Hamiltonian subsystems, each defined on the corresponding symplectic subspace of the above decomposition. Let $A$ be an $M$-orthogonal $4n \times 4n$ matrix commuting with $K$. Then the linear transformation 
\[
(q, p) \mapsto (A^{-1}q, A^{-\top}p)
\]
is symplectic and transforms the linearized system into a block form adapted to the decomposition $E_1 \oplus E_2 \oplus E_3$. The matrix $A$ is obtained by choosing an $M$-orthonormal basis of $\R^{4n}$, adapted to this decomposition: the first vectors generate the translational and rotational subspaces, while the remaining ones complete them to an $M$-orthonormal basis of $\R^{4n}$. 

\subsection{Stability of Lagrange equilateral triangle RE} \label{sec_3.3}

We apply the framework discussed in the previous sections to analyze the linear stability of the relative equilibria generated by Lagrange equilateral triangle central configurations.

Thus, let $q \in \s$ be a Lagrange equilateral triangle CC lying in the plane 
$\R^2 \times \{0\}$, and let $q(\gamma)=R(\gamma)q$ denote the CC
obtained by rotating $q$ by an angle $\gamma$ in the $xz$–plane,
where $R(\gamma)$ is the corresponding rotation matrix. The case $\gamma=0$ corresponds to the initial configuration lying in the plane $xy$, while $\gamma=\tfrac{\pi}{2}$ corresponds to a configuration contained in the $xz$-plane. By rotational invariance and reflection symmetry, it suffices to restrict to $\gamma \in [0,\pi/2]$. 
 We define the \textit{stability region} of $q$ as 
$$I(q) := \big \{\gamma \in [0,\pi/2] \ \big |\ \text{the RE corresponding to } q(\gamma) \text{ is linearly stable}\big \}.$$
Our numerical experiments suggest the following behavior: for every configuration $q \in S$ that we tested, the stability region $I(q)$ appears to be an interval containing $[\pi/3,\pi/2]$. Moreover, this interval appears to enlarge as the mass ratio approaches Routh's stability criterion
\[
\frac{m_1m_2 + m_1m_3 + m_2m_3}{(m_1 + m_2 + m_3)^2}
<
\frac{1}{27},
\]
and the computations indicate that $I(q) = [0,\pi/2]$ whenever Routh's criterion is satisfied.

The discussion above is based on numerical computations of the spectrum of the linearized Hamiltonian system restricted to the invariant subspace $E_3$. Since these computations are not computer-assisted in the rigorous sense, we present them as numerical evidence rather than as a formal proof. %We emphasize that the main results of Sections~4 and~5 do not rely on this numerical observation.
More precisely, we place three masses at the vertices of an equilateral triangle in the $xy$-plane with coordinates
\[
q_1 = (1,0,0,0), \qquad
q_2 = \Bigl(-\frac12,-\frac{\sqrt3}{2},0,0\Bigr), \qquad
q_3 = \Bigl(-\frac12,\frac{\sqrt3}{2},0,0\Bigr).
\]
The configuration is then translated so that the center of mass lies at the origin and normalized so as to satisfy $q^\top M q = 1$. The rotation matrix $R(\gamma)$ defining $q(\gamma)$ is given by
\[
R(\gamma)=
\begin{pmatrix}
\cos\gamma & 0 & \sin\gamma & 0\\
0 & 1 & 0 & 0\\
-\sin\gamma & 0 & \cos\gamma & 0\\
0 & 0 & 0 & 1
\end{pmatrix},
\qquad \gamma \in [0,\pi/2].
\]
We construct the linearized system~\eqref{linear_system} and the symplectic change of coordinates described above, restricting the system to the invariant subspace $E_3$. The spectrum of the resulting linear system is then computed while varying $\gamma$ over a discrete grid in $[0,\pi/2]$, tracking the real and imaginary parts of the eigenvalues at each step. The computation is repeated for several choices of mass distributions, leading to the stability behavior described above. %The plots in Figure~1 correspond to the cases
%\[
%\text{(a) } m_1=m_2=m_3=1, \qquad
%\text{(b) } m_1=30,\; m_2=m_3=1, \qquad
%\text{(c) } m_1=50.46,\; m_2=m_3=1.
%\]
Figure~1 displays the maximal real part of the spectrum of the linearized system as a function of the inclination angle. 
\begin{figure}[h!]
    \centering

    \begin{subfigure}[b]{0.48\textwidth}
        \centering
        \includegraphics[width=\linewidth]{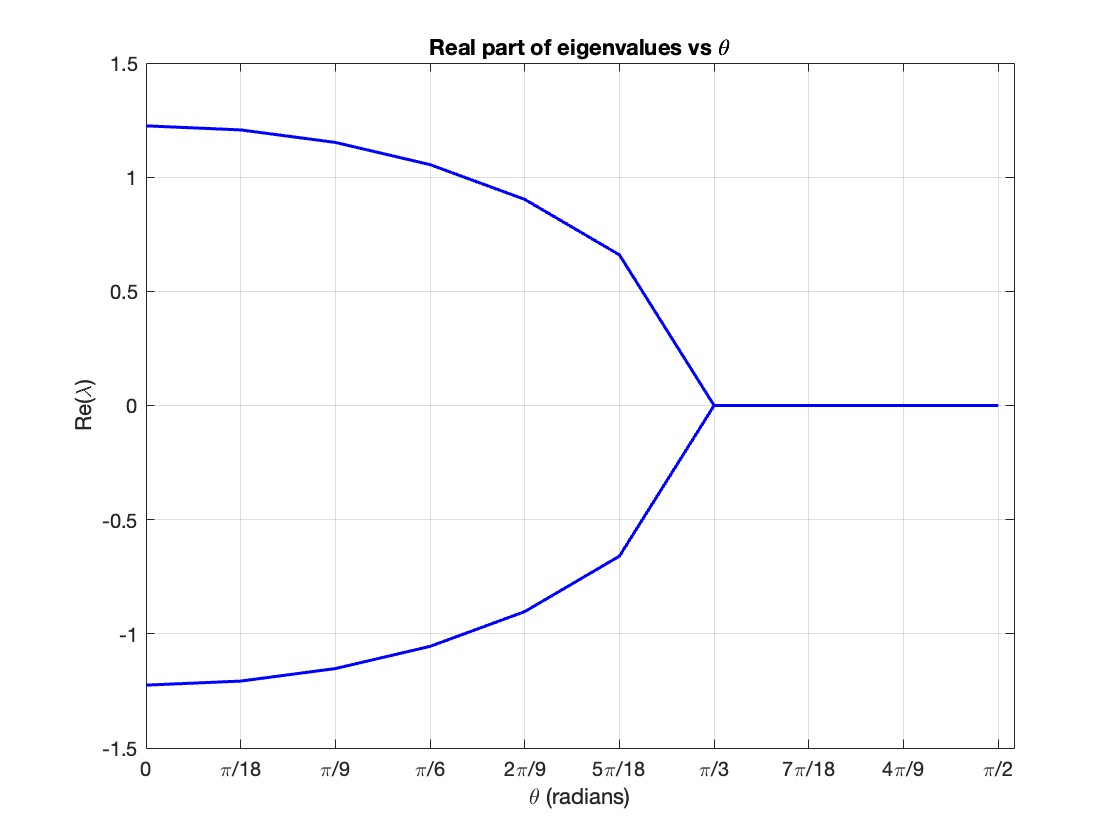}
        \caption{$m_1=m_2=m_3=1$}
        \label{fig:eig_equal}
    \end{subfigure}
    \hfill
    \begin{subfigure}[b]{0.48\textwidth}
        \centering
        \includegraphics[width=\linewidth]{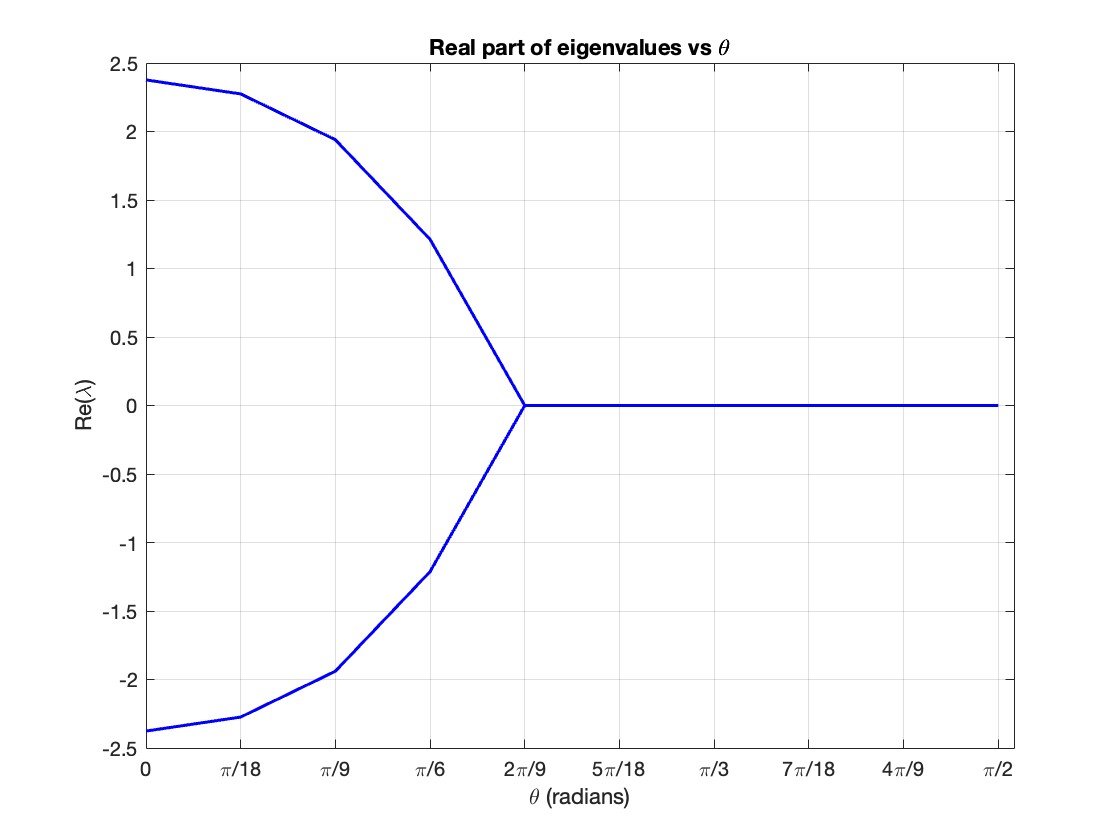}
        \caption{$m_1=30,\; m_2=m_3=1$}
        \label{fig:eig_m1_30}
    \end{subfigure}

    \vspace{0.5cm}

    \begin{subfigure}[b]{0.48\textwidth}
        \centering
        \includegraphics[width=\linewidth]{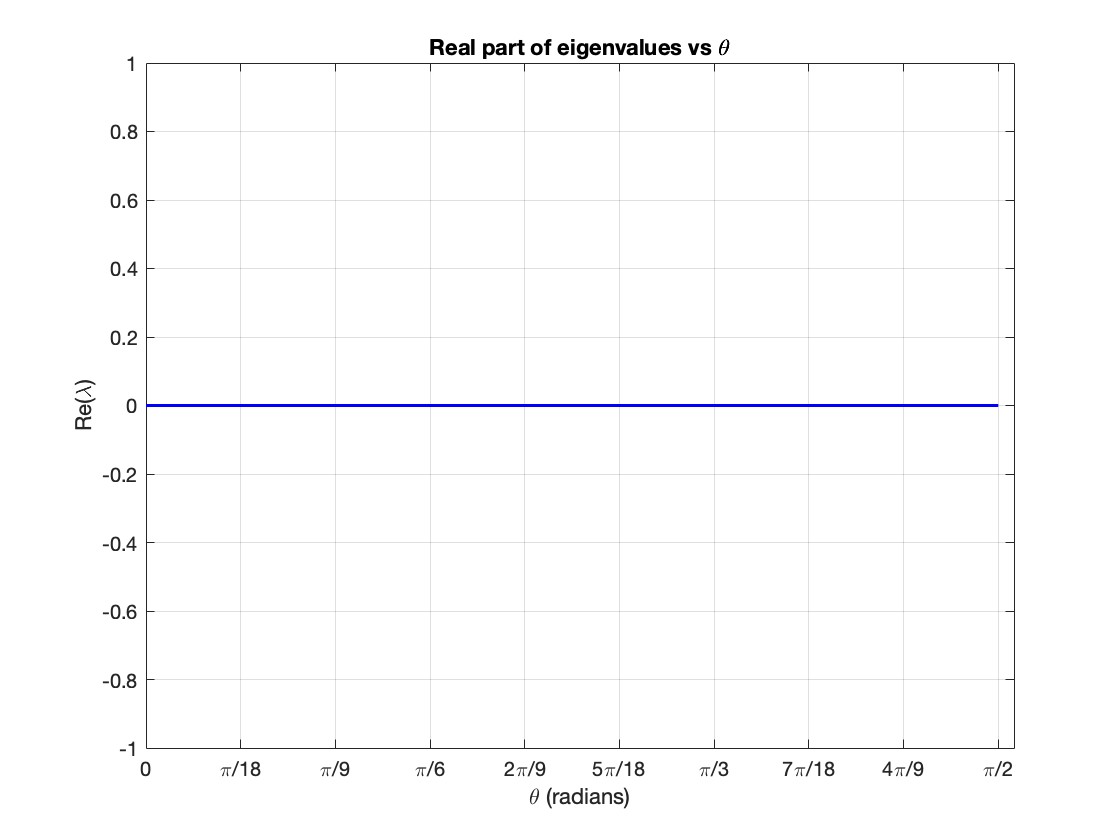}
        \caption{$m_1=50.46,\; m_2=m_3=1$.}
        \label{fig:eig_m1_50}
    \end{subfigure}

    \caption{Real part of the eigenvalues for different mass ratios. Fixing $m_2 = m_3 = 1$, Routh’s criterion holds for $m_1>50.456$.}
    \label{fig:eig_comparison}
\end{figure}

\noindent In Figure~1a, corresponding to the case of three equal masses $m_1=m_2=m_3=1$, the maximal real part vanishes precisely at $\gamma=\pi/3$ and remains zero for all $\gamma \in [\pi/3,\pi/2]$. 
Keeping $m_2=m_3=1$, the other two plots show the effect of increasing the mass $m_1$: in Figure~1b, $m_1$ is set to $30$ and the stability region is visibly enlarged, while in Figure~1c, $m_1$ is increased to $50.46$, a value close to the threshold predicted by Routh's criterion $m_1 > 25 + 18\sqrt2 \approx 50.456,$
for which the configuration appears to be stable for all inclination angles. In all cases, in the whole stability region, apart from the two zero eigenvalues discussed in Remark~3.2 below, the remaining eigenvalues are numerically purely imaginary and simple, which is consistent with linear stability. 

\begin{rmk}
    Unlike in Routh's criterion for planar Lagrange RE, we do not observe a transition from instability to linear stability through spectral stability here.
    \end{rmk}

\begin{rmk}
The spectrum of the linearized Hamiltonian system restricted to $E_3$ always contains two zero eigenvalues, due to the part of the rotational symmetry that we are not able to eliminate even after the symplectic decomposition, see~\cite[Section 2.3]{APW}.
\end{rmk}

\begin{rmk} CC in $\R^4$ do not exhaust the class of configurations generating relative equilibria. Because rotations can occur simultaneously in two orthogonal planes with different angular velocities, the  broader class of \textit{balanced configurations}  generates relative equilibria. These were introduced by Albouy and Chenciner in \cite{Albouy_Chen} (see also~\cite{Moeckel2014,AP}). In particular, isosceles triangles with equal masses at the symmetric vertices give rise to RE for the $3$-body problem in $\R^4$.  The study of their stability features can be found in~\cite{Giorgia}.
\end{rmk}

\section{Electromagnetic curvature and stability} \label{sec_4}

Let $(M, g)$ be an oriented Riemannian manifold. An (exact) electromagnetic system on $(M, g)$ consists of a magnetic potential given by a smooth 1-form $\vartheta \in \Omega^1(M)$, and a scalar potential $V \in C^\infty(M)$. The electromagnetic Lagrangian is given by
\[
L(q, v) = \tfrac12 |v|_g^2 + \vartheta_q[v] - V(q), \qquad (q, v) \in TM,
\]
and defines the electromagnetic flow on $TM$ via the Euler-Lagrange equations. The electromagnetic flow corresponds via the Legendre transform to the Hamiltonian flow defined by 
\[
H(q, p) = \tfrac12 \lvert p - \vartheta_q \rvert_g^2 + V(q), \qquad (q, p) \in T^*M
\]
where, with a slight abuse of notation, we denote by $\lvert \cdot \rvert_g$ also the dual norm.
The associated magnetic field is the (exact) 2-form $\sigma = d\theta$. 
The energy function 
\[
E(q, v) = \frac12 |v|_g^2 + V(q)
\]
is conserved along the electromagnetic flow and takes values in the interval $[\inf_M V, + \infty)$. Among the energy levels in $[\inf_M V, + \infty)$, two values mark significant changes in the dynamical properties of the electromagnetic flow, namely the maximum of the energy over the zero section
\[
e_0 \defeq \sup_{\substack{q \in M}} V(q).
\]
and the Mañé critical value of the universal cover
\[
c_u \defeq \inf_{f \in C^\infty(\tilde M)} \sup_{\tilde q \in \tilde M} \tilde H(\tilde q, \mathrm{d}f(\tilde q)),
\]
where $\tilde H: T^*\tilde M \rightarrow \R $ is the lift of the Hamiltonian $H$ to the universal cover $\tilde M$ of $M$. For energy values $e \geq e_0$, the footpoint projection $\pi:E^{-1}(e) \to M$ is surjective, whereas for energies below $e_0$ it is not, and the dynamics takes place in the so-called Hill's region. The value $c_u$, instead, characterizes the energy threshold above which the free-period action functional is bounded from below and satisfies the Palais--Smale condition; see, e.g., \cite{abbo2013}. It is well known that
\[
\inf_M V \leq e_0 \leq c_u,
\]
with the first inequality being an equality if and only if $V \equiv 0$. It is a more delicate problem to characterize the cases in which $e_0 = c_u$. Under the assumption that $V \equiv 0$ (that is, the system is purely magnetic), one can show that $e_0 = c_u$ if and only if $\sigma \equiv 0$ (that is, $\vartheta$ is closed). We shall see below that, for $V \not\equiv 0$, there is a connection between the condition $e_0 < c_u$ and the stability properties of equilibrium points. For more general conditions ensuring the strict inequality $e_0<c_u$ we refer to \cite{ABM}.

A fundamental problem in the study of electromagnetic systems is the existence of periodic orbits on a prescribed energy level. Above $c_u$, classical variational methods guarantee the existence of periodic trajectories on every energy level. In contrast, for energies below $c_u$, periodic orbits are known to exist only for almost every energy level; see \cite{abbo2013,lus-fet}. Recently, in \cite{assenza_testolina}, a notion of
\emph{electromagnetic curvature} was introduced to further investigate the dynamics in the energy range $[e_0,c_u]$. In particular, it is shown that, for energies sufficiently close to $e_0$, the existence of periodic orbits is guaranteed on every energy level as soon as the potential is $C^2$-small.

We now briefly recall the definition of electromagnetic curvature, referring to \cite{assenza_testolina} for further details. Using the Jacobi--Maupertuis principle, electromagnetic trajectories with energy $e > e_0$ are equivalent, up to a time reparametrization, to magnetic geodesics of the conformal metric
\[
g_e \defeq 2(e - V)g.
\]
The electromagnetic curvature is then defined as the magnetic curvature (see \cite{assenza}) of the purely magnetic system $(g_e, \sigma)$, pulled back to the original energy level $\Sigma_e \defeq E^{-1}(e)$. In the particular case in which $M$ is a surface, the electromagnetic curvature at energy $e$ reads
\begin{equation}\label{curvature}
\begin{split}
K_e^{g,b,V}(q,v)
&= \frac{K^g(q)}{2(e-V(q))}
+ \frac{\Delta_g V(q)}{4(e-V(q))^2}
+ \frac{|\mathrm{d}V(q)|_g^2}{4(e-V(q))^3}
- \frac{\mathrm{d}b(q)[\mathrm{J}^g \hat v]}{(2(e-V(q)))^{\frac32}} \\
&\quad
- \frac{2b(q)\,\mathrm{d}V(q)[\mathrm{J}^g \hat v]}{(2(e-V(q)))^{\frac52}}
+ \frac{b^2(q)}{4(e-V(q))^2},
\end{split}
\end{equation}
where $(q, v) \in \Sigma_e$, $\hat{v} \defeq v / \lvert v \rvert_g$, $K^g$ is the Gaussian curvature of $(M, g)$, $\Delta_g V$ is the Laplacian of $V$, $J^g$ is the complex structure induced by $g$, and $\sigma = b\mu_g$ with magnetic function $b \in C^\infty(M)$.

\begin{rmk}
The definition requires $e > e_0$ only in order for the Jacobi--Maupertuis metric $g_e$ to be globally well defined. For linear stability, however, this restriction is inessential. Indeed, when studying the linear stability of an equilibrium point, one linearizes the dynamics near that point; in particular, the electromagnetic curvature is only needed on a neighborhood contained in the open set $\{V<e\}$, and hence the degeneracy of the Jacobi--Maupertuis metric at the boundary of the Hill's region plays no role. As a consequence, the same local construction, and in particular the arguments behind Theorem \ref{prop:equivalenceplane}, and the possible higher-dimensional generalizations discussed after the proof, can also be used for equilibrium points that are not local maxima of the potential. 
\end{rmk}

\subsection{A two-dimensional model} \label{sec_4.1}

In this section, we study a simple (linear) electromagnetic system on the plane that serves as a reference for what follows. In particular, we analyze the relationship between the linear stability of an equilibrium point and electromagnetic curvature. Information about nonlinear systems is then obtained, as usual, by linearizing the dynamics at a given equilibrium point.
Thus, consider the plane $\mathbb{R}^2$ endowed with the standard Euclidean metric, and let $q=(x,y)$ denote the configuration variable. We study the electromagnetic system defined by the quadratic potential
\[
V(x,y) = -\alpha x^2 - \beta y^2, \qquad \alpha,\beta > 0,
\]
and the magnetic potential
\[
\vartheta = -y\,dx + x\,dy.
\]
The magnetic field is given by
\[
\sigma = d\vartheta = 2\,dx \wedge dy,
\]
hence the magnetic function is constant, $b \equiv 2$. The corresponding Hamiltonian is
\[
H(q,p) = \tfrac{1}{2}\abs{p - \vartheta_q}^2 + V(q)
= \tfrac{1}{2}\big[(p_x + y)^2 + (p_y - x)^2\big] - \alpha x^2 - \beta y^2,
\]
where $p=(p_x,p_y)$ denotes the momentum coordinates. The associated energy function is
\[
E(q, \dot q) = \tfrac{1}{2} \lvert \dot q \rvert^2 + V(q).
\]
Observe that the potential $V$ has a non-degenerate maximum at $q_0=(0,0)$, with $V(q_0)=0$. In particular, $e_0=0$. The following proposition describes the relationship between the linear stability of the origin $q_0$ as an equilibrium point of the electromagnetic system, the Ma\~n\'e critical value $c_u$, and the behavior of the electromagnetic curvature in a neighborhood of $q_0$. To ease the notation, hereafter we write $K_e$ instead of $K_e^{g,b,V}$.

\begin{thm}
The following assertions are equivalent:
\begin{enumerate}
\item $q_0$ is a linearly stable equilibrium point.
\item $c_u>e_0$.
\item For every $e>0$, we have $K_e(q_0,\cdot)>0$ and, denoting by $\mathcal C_e$ the connected component of  $\{q \in \R^2 \mid K_e(q,\cdot)>0\}$ containing $q_0$, we have that $\partial \mathcal C_e$
is either empty or given by the two branches of a hyperbola.
\item $\sqrt{\alpha} + \sqrt{\beta} < \sqrt{2}$.
\end{enumerate}
\label{prop:equivalenceplane}
\end{thm}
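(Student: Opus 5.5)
The plan is to take assertion (4) as the hub and prove (1)$\Leftrightarrow$(4), (2)$\Leftrightarrow$(4) and (3)$\Leftrightarrow$(4) separately, since each of (1), (2), (3) reduces to an explicit algebraic condition on $\alpha,\beta$.

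\emph{(1)$\Leftrightarrow$(4).} From $L=\tfrac12|\dot q|^2-y\dot x+x\dot y+\alpha x^2+\beta y^2$ the Euler--Lagrange equations are
\[
\ddot x=2\dot y+2\alpha x,\qquad \ddot y=-2\dot x+2\beta y .
\]
These equations are already linear, so the characteristic polynomial is $(\lambda^2-2\alpha)(\lambda^2-2\beta)+4\lambda^2$. Substituting $\mu=\lambda^2$ gives
\[
\mu^2+(4-2\alpha-2\beta)\mu+4\alpha\beta=0 .
\]
Linear stability means that both roots $\mu$ are real, negative and distinct. Distinctness is what gives diagonalizability; at a double root $\mu<0$ one checks directly that the matrix has a nontrivial Jordan block.
\begin{itemize}
\item Since the product of the roots is $4\alpha\beta>0$, the roots are negative exactly when $2-\alpha-\beta>0$.
\item The discriminant is positive exactly when $|2-\alpha-\beta|>2\sqrt{\alpha\beta}$.
\end{itemize}
Together these two conditions say $2-\alpha-\beta>2\sqrt{\alpha\beta}$, that is $(\sqrt\alpha+\sqrt\beta)^2<2$.

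\emph{(2)$\Leftrightarrow$(4).} Since $\R^2$ is simply connected, $c_u=\inf_f\sup_q H(q,df(q))$. The first step is a scaling argument. $H$ is a homogeneous quadratic in $(q,p)$, so if $f$ gives $\sup H(q,df)\le k$, then $f_t(q)=t^2f(q/t)$ gives $\sup H(q,df_t)\le t^2k$. Hence $c_u\in\{0,+\infty\}$ whenever $c_u<\infty$ forces $c_u\le 0=e_0$, and $c_u>e_0$ is equivalent to the statement that no $f$ has $\sup H(q,df)<\infty$.

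\begin{itemize}
\item \emph{Failure of (4) implies $c_u=e_0$.} In this case the linear flow has eigenvalues off the imaginary axis (or a degenerate block). I would use the stable subspace, or a suitable limit of it in the borderline case, which is an invariant Lagrangian plane. I would show it is transverse to the fibre, so that it is the graph of $df$ for a quadratic form $f$. Since $H(0)=0$ and $H$ is invariant, $H\equiv 0$ on this plane, so $c_u\le 0$.
\item \emph{(4) implies $c_u=+\infty$.} Here I would use the characterization of $c_u$ through closed curves of negative $(L+k)$-action. The stable linear flow has explicit elliptic periodic orbits with energy $e>0$, and I would show they have negative $(L+e)$-action for some $e>0$, which gives $c_u>0$ and hence $c_u=\infty$.
\end{itemize}
An alternative route for this direction is to restrict to quadratic $f$ by a blow-up (Taylor) argument at the origin and check algebraically that the quadratic form $H(q,df(q))$ is never negative semidefinite. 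I expect this half to be the main obstacle.

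\emph{(3)$\Leftrightarrow$(4).} Substitute $K^g=0$, $\Delta V=-2(\alpha+\beta)$, $dV=(-2\alpha x,-2\beta y)$ and $b\equiv2$, $db=0$ into \eqref{curvature}. Positivity ``for all $v$'' amounts to taking the worst direction, $dV[J\hat v]=|dV|$. After multiplying by $4(e-V)^3$, and writing $w=e-V$, the condition becomes
\[
(4-2\alpha-2\beta)\,w+|dV|^2-4\sqrt2\,|dV|\,w^{1/2}>0 .
\]
\begin{itemize}
\item At $q_0$ we have $dV=0$ and $w=e$, so $K_e(q_0,\cdot)>0$ iff $\alpha+\beta<2$.
\item To describe $\partial\mathcal C_e$, I would rescale $q=\sqrt e\,q'$, which removes $e$, and treat the left side as a quadratic in $s=|dV|/\sqrt w$. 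The boundary is then the set where $s$ equals the smaller root $s_-=2\sqrt2-\sqrt{2(\alpha+\beta)+4}$.
\item Squaring gives $4\alpha^2x^2+4\beta^2y^2=s_-^2\,(e+\alpha x^2+\beta y^2)$, which is a conic.
\item This conic is empty or a hyperbola according to the signs of $4\alpha^2-s_-^2\alpha$ and $4\beta^2-s_-^2\beta$, that is, according to the sign of $4\alpha-s_-^2$ and of $4\beta-s_-^2$.
\end{itemize}
The last step is to match this sign condition with $\sqrt\alpha+\sqrt\beta<\sqrt2$, using the condition $\alpha+\beta<2$ already obtained at $q_0$. I expect this matching to be a direct but careful inequality manipulation.
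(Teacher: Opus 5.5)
\textbf{Main gap: (4)$\Rightarrow$(2).} You rightly flag this as the hard half, but the mechanism you propose cannot work, because periodic orbits of this flow never have negative action. Since $H$ is a homogeneous quadratic form in $(q,p)$, along any $T$-periodic solution of energy $e$ Euler's identity gives $p\cdot\dot q-q\cdot\dot p=q\cdot\partial_qH+p\cdot\partial_pH=2e$. Integrating by parts over one period yields $\int_0^T p\cdot\dot q\,dt=eT$. Because $L=p\cdot\dot q-H$, this means $\mathcal S_k(\gamma)=kT$ for \emph{every} periodic orbit $\gamma$ and every $k$. So for $k>0$ no trajectory can witness $c_u>0$, whatever $\alpha,\beta$ are.

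Your fallback is not justified either. The rescaling $t^2f(q/t)$ improves the bound on $\sup_q H(q,df(q))$ only as $t\to0$, which probes $f$ near infinity rather than its Taylor jet at the origin. So nothing reduces the question to quadratic $f$, and excluding quadratic $f$ alone does not give $c_u>0$.

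The missing idea is to use a test loop that is \emph{not} a solution. The paper takes the clockwise ellipse $\gamma(t)=(\beta^{1/4}\cos\omega t,-\alpha^{1/4}\sin\omega t)$ with $\omega=\sqrt2(\alpha\beta)^{1/4}$, parametrized so that $\tfrac12|\dot\gamma|^2=-V(\gamma)$. Then $\mathcal S_0(\gamma)$ equals the Jacobi--Maupertuis length $\oint\sqrt{-2V}\,|dq|$ minus twice the enclosed area. This is a positive multiple of $\sqrt\alpha+\sqrt\beta-\sqrt2$, hence negative under (4), and your scaling observation then upgrades $c_u>0$ to $c_u=+\infty$. This ellipse is a trajectory only at the threshold $\sqrt\alpha+\sqrt\beta=\sqrt2$, where indeed $\mathcal S_0=0$, consistent with the identity above.

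\textbf{The other parts.} (1)$\Leftrightarrow$(4) is the paper's argument.

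For $\neg$(4)$\Rightarrow c_u=e_0$, your stable-subspace argument is a legitimate alternative to the paper's explicit $f_C=Cxy$. Transversality to the fibre is immediate: $H\equiv0$ on $E^s$, while $H(0,p)=\tfrac12|p|^2>0$ for $p\neq0$. At the threshold, any limit of stable planes in the compact Lagrangian Grassmannian still satisfies $H\equiv0$, so the same reasoning applies. The paper's formula is quicker; yours explains where the calibrating quadratic comes from and extends to hyperbolic equilibria in higher dimension.

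In (3)$\Leftrightarrow$(4) you follow the paper's route, but there is an arithmetic slip. With $b=2$, after multiplying by $4w^3$ the cross term is $2\sqrt2\,|dV|\,w^{1/2}$, not $4\sqrt2\,|dV|\,w^{1/2}$. The correct smaller root is $s_-=\sqrt2-\sqrt{2(\alpha+\beta)-2}$ (twice the paper's $t_-$), which exists only for $\alpha+\beta\ge1$.
\begin{itemize}
\item For $\alpha+\beta<1$ the curvature is positive everywhere; that, not two negative coefficients, is where the empty case comes from.
\item For $1\le\alpha+\beta<2$ (with one exception below) the conic is a hyperbola, a pair of lines, or an ellipse with both coefficients positive.
\item The ellipse is exactly what occurs when $\sqrt\alpha+\sqrt\beta>\sqrt2$, so it must appear in your case list.
\end{itemize}
With your value of $s_-$ the transition would not sit on $\sqrt\alpha+\sqrt\beta=\sqrt2$ (at $\alpha=\beta=\tfrac12$ both of your coefficients are positive), so the final matching would fail.

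Even with the correct constants, treat $\alpha=\beta=\tfrac12$ separately. There $s_-=s_+=\sqrt2$, while $s=|q|/\sqrt{e+|q|^2/2}<\sqrt2$ everywhere. Hence $K_e>0$ on all of $\R^2$ and $\partial\mathcal C_e=\emptyset$, although $\sqrt\alpha+\sqrt\beta=\sqrt2$. Thus (3)$\Rightarrow$(4) actually fails at this single point, a case the paper's proof also overlooks.
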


Before proving the theorem, we make some comments. First, in the purely mechanical case (i.e.\ when $\sigma = 0$), a non-degenerate maximum of the potential is always hyperbolic. The presence of a magnetic field can change the nature of the equilibrium point, making it linearly stable. The theorem above quantifies, in dimension two, how strong the magnetic field must be in order for the equilibrium point to become linearly stable. Second, the change in the stability properties of the equilibrium point occurs precisely when the topology of $\partial \mathcal C_e$ changes. Third, the condition $\alpha + \beta < 2$ is necessary for the electromagnetic curvature to be positive in a neighborhood of the origin. When $\alpha + \beta < 1$, the magnetic field dominates the potential, the equilibrium is linearly stable, and the electromagnetic curvature is positive everywhere, so that $\partial \mathcal C_e = \emptyset$. In the range $1 \le \alpha+\beta <2$, the strengths of the magnetic field and the potential are comparable, and positivity of the electromagnetic curvature is no longer guaranteed everywhere.

\begin{proof}
We first check the linear stability condition. The system is linear, and Hamilton's equations are given by the matrix
\[
L=\begin{pmatrix}
0 & 0 & 1 & 0\\
0 & 0 & 0 & 1 \\
2\alpha & 0 & 0 & 2\\
0& 2\beta & -2 & 0
\end{pmatrix}.
\]
The characteristic polynomial of $L$ is
\[
\lambda^4 + (4 - 2(\alpha + \beta)) \lambda^2 + 4\alpha\beta = 0,
\]
and the equilibrium is linearly stable if and only if all zeros are purely imaginary and $L$ is diagonalizable.
Setting $\mu = \lambda^2$, we see that this is equivalent to requiring that
\[
\mu_{\pm} = \alpha + \beta - 2 \pm \sqrt{(2 - \alpha - \beta)^2 - 4\alpha\beta} <0
\]
are both negative and distinct\footnote{One can easily check that, if $\mu_{+}=\mu_- <0$, which happens if and only if $\sqrt{\alpha}+\sqrt{\beta}=\sqrt{2}$, then the equilibrium is spectrally stable but not linearly stable.}, which happens if and only if
$$\alpha+\beta-2 <0, \quad \text{and}\quad (2-\alpha-\beta)^2 - 4\alpha\beta >0.$$
It is now straightforward to check that this is equivalent to
\[
\sqrt{\alpha} + \sqrt{\beta} < \sqrt{2}.
\]

We now determine for which values of $\alpha$ and $\beta$ the strict inequality $c_u>e_0$ holds. Recall that $c_u$ is defined by
$$c_u = \inf_{f\in C^\infty(\R^2)} \sup_{q\in \R^2} H(q,\mathrm d f(q)).$$
Taking $f_C:\R^2\to \R$, $f_C(x,y) = Cxy$, for some $C\in \R$, we obtain
\[
H(q, \mathrm d f_C(q)) = \tfrac{1}{2}(C - 1)^2 x^2 + \tfrac{1}{2}(C + 1)^2 y^2 - \alpha x^2 - \beta y^2.
\]
We readily see that there exists $C_*\in \R$ such that $H(\cdot,\mathrm d f_{C_*}(\cdot)) \leq 0$ if and only if the system
$$\left \{\begin{array}{r}  (C - 1)^2 \leq 2\alpha, \\ (C + 1)^2 \leq 2\beta, \end{array}\right.$$
has a solution. This happens if and only if $\sqrt{\alpha} + \sqrt{\beta} \geq \sqrt{2}$.
Therefore, if $\sqrt{\alpha}+\sqrt{\beta} \geq \sqrt{2}$, then
\begin{align*}
0\leq c_u &= \inf_{f\in C^\infty(\R^2)} \sup_{q\in \R^2} H(q,\mathrm d f(q))\\
        &\leq \inf_{C\in \R} \sup_{q\in \R^2} H(q,\mathrm d f_C(q))\\
        &\leq \sup_{q\in \R^2} H(q,\mathrm d f_{C_*}(q))\\
        &=0,
\end{align*}
thus $c_u=0$. To prove the reverse implication, we assume that $\sqrt{\alpha}+ \sqrt{\beta}< \sqrt{2}$ and show that in this case $c_u>0$. For this purpose, it is convenient to use the equivalent definition of $c_u$ in terms of the free-period Lagrangian action functional (see, e.g., \cite{abbo2013})
$$\mathcal S_\kappa (\gamma) := \int_0^{T(\gamma)} \Big [ L(\gamma(t),\dot \gamma(t)) + \kappa \Big ]\mathrm d t, \quad \gamma : \R/T(\gamma)\mathbb Z \to \R^2, \ \kappa \in \R,$$
namely
$$c_u := \sup \Big \{ \kappa \in \R \ \Big |\ \mathcal S_\kappa (\gamma) <0, \ \text{for some loop} \ \gamma\Big \}.$$
Therefore, all we have to show is that there exists a loop $\gamma$ such that $\mathcal S_0(\gamma)<0$. We choose
$$\gamma_{\alpha,\beta}(t) := \Big (\beta^{1/4} \cos \omega t , - \alpha^{1/4} \sin \omega t \Big), \quad \omega:= \sqrt{2} (\alpha\beta)^{1/4},$$
which gives a clockwise parametrization of the ellipse\footnote{Since the system is linear, scaling the ellipse by a factor $\rho$ only multiplies the action $\mathcal S_0$ by a factor $\rho^2$. In particular, it does not change the sign of $\mathcal S_0$.} centered at the origin, with semiaxes $\beta^{1/4}$ and $\alpha^{1/4}$, respectively. The period of $\gamma_{\alpha,\beta}$ is given by $T=\frac{2\pi}{\omega} = \frac{\pi\sqrt{2}}{(\alpha\beta)^{1/4}}$. A straightforward computation shows that
$$\mathcal S_0(\gamma_{\alpha,\beta}) = 2\sqrt{2}\pi (\alpha\beta)^{1/4} \Big ( \sqrt{\alpha}+\sqrt{\beta} - \sqrt{2}\Big ) <0,$$
as claimed. We note that a similar computation with $\gamma_{\alpha,\beta}$ replaced by a suitable parametrization $\gamma_{\text{cir}}$ of the circle of radius $1$ centered at the origin yields $\mathcal S_0(\gamma_{\text{cir}})<0$ if and only if $\alpha+\beta<1$, which is precisely the condition implying that the electromagnetic curvature is positive everywhere (and hence $\partial \mathcal C_e=\emptyset$), as we will see below.

We now study the electromagnetic curvature for energies $e > 0$ in detail.  For a unit tangent vector $v = (\cos\varphi, \sin\varphi)$, \eqref{curvature} gives
\[
K_e(q,v) =
\frac{2 - \alpha - \beta}{2 (e + \alpha x^2 + \beta y^2)^2}
+ \frac{\alpha^2 x^2 + \beta^2 y^2}{(e + \alpha x^2 + \beta y^2)^3}
- \frac{\sqrt{2}\,\big(\alpha x \sin\varphi - \beta y \cos\varphi\big)}{(e + \alpha x^2 + \beta y^2)^{5/2}}.
\]
At the equilibrium point $q_0$, this simplifies to
\[
K_e(q_0,v) = \frac{2 - (\alpha + \beta)}{2e^2},
\]
which is positive if and only if $\alpha + \beta < 2$. Therefore, hereafter we assume that $\alpha+\beta<2$. We are now interested in understanding how the behavior of the electromagnetic curvature in a neighborhood of $q_0$ for energies $e \to 0^+$ is affected by $\alpha$ and $\beta$.
To estimate the size of the region where $K_e > 0$, we first minimize the curvature over $\varphi$. This yields
\[
K_e(q,v) \ge
\frac{2 - \alpha - \beta}{2 (e + \alpha x^2 + \beta y^2)^2}
+ \frac{\alpha^2 x^2 + \beta^2 y^2}{(e + \alpha x^2 + \beta y^2)^3}
- \frac{\sqrt{2}\, \sqrt{\alpha^2 x^2 + \beta^2 y^2}}{(e + \alpha x^2 + \beta y^2)^{5/2}} =: \widetilde{K_e}(q).
\]
Introducing the parameter
\[
t = \sqrt{\frac{\alpha^2 x^2 + \beta^2 y^2}{e + \alpha x^2 + \beta y^2}},
\]
we can write
\[
\widetilde {K_e}(q) = \frac{1}{(e + \alpha x^2 + \beta y^2)^2}
\Bigl( t^2 - \sqrt{2}\,t + \tfrac{2 - (\alpha + \beta)}{2} \Bigr).
\]
Thus, the sign of $\widetilde{K_e}$ is governed by the quadratic polynomial
\[
Q(t) = t^2 - \sqrt{2}\,t + \tfrac{2 - (\alpha + \beta)}{2}.
\]
The minimum of $Q$ occurs at $t_* = \frac {1}{\sqrt{2}}$ and satisfies
\[
Q(t_*) = \tfrac{1 - (\alpha + \beta)}{2},
\]
which is positive as soon as $\alpha + \beta < 1$. In this case, $\widetilde{K_e}> 0$ everywhere; hence, in particular, the electromagnetic curvature $K_e$ is always positive, as mentioned above.
If $1 \le \alpha + \beta < 2$, the polynomial $Q$ has two positive roots
\[
t_{\pm} = \frac{1 \pm \sqrt{\alpha + \beta - 1}}{\sqrt{2}},
\]
and $\widetilde{K_e}$ is non-positive for $t_-\leq t\le t_+$ and negative for $t_- < t < t_+$. In this case, the curvature changes sign near the origin at $t = t_-$, and the boundary $\partial \mathcal C_e$ of the region $\mathcal C_e$ around $q_0$ where the electromagnetic curvature is positive is determined by the equation
\begin{equation}\label{level_set}
\alpha(\alpha - t_-^2)x^2 + \beta(\beta - t_-^2) y^2 = t_-^2\, e.
\end{equation}
The geometry of $\partial \mathcal C_e$ depends on the signs of the coefficients $\alpha - t_-^2$ and $\beta - t_-^2$. The transition occurs when
\[
\alpha = t_-^2
\quad \text{or} \quad
\beta = t_-^2,
\]
which is equivalent to the condition
\[
\sqrt{\alpha} + \sqrt{\beta} = \sqrt{2}.
\]
Moreover, as $e \to 0^+$, the sets $\partial C_e$ move closer and closer to the origin, and their topology is completely determined by the value of $\sqrt{\alpha} + \sqrt{\beta}$. More precisely, we have the following cases:
\begin{itemize}
    \item If $\sqrt{\alpha} + \sqrt{\beta} < \sqrt{2}$, the coefficients $\alpha - t_-^2$ and $\beta - t_-^2$ have opposite signs, so the quadratic form is indefinite. The set $\partial \mathcal C_e$ is given by the two branches of a \emph{hyperbola}, and as $e\to 0^+$ approaches two straight-line asymptotes. In the limit $e\to 0^+$, the region $\mathcal C_e$ remains wedge-shaped, meaning that there are always two cones of positive curvature (see Figure~\ref{fig:ricci_hyp}).

    \item If $\sqrt{\alpha} + \sqrt{\beta} > \sqrt{2}$, the coefficients $\alpha - t_-^2$ and $\beta - t_-^2$ have the same negative sign, and the quadratic form is negative definite. Thus, $\partial \mathcal C_e$ is an \emph{ellipse}, and, in the limit $e\to 0^+$, the region $\mathcal C_e$ collapses to the origin (see Figure~\ref{fig:ricci_ell}).

    \item In the borderline case $\sqrt{\alpha} + \sqrt{\beta} = \sqrt{2}$, one of the two coefficients vanishes, and the curve degenerates into a pair of \emph{parallel lines}. \qedhere
\end{itemize}
\end{proof}

\begin{figure}[h!]
\centering
\includegraphics[width=0.6\textwidth]{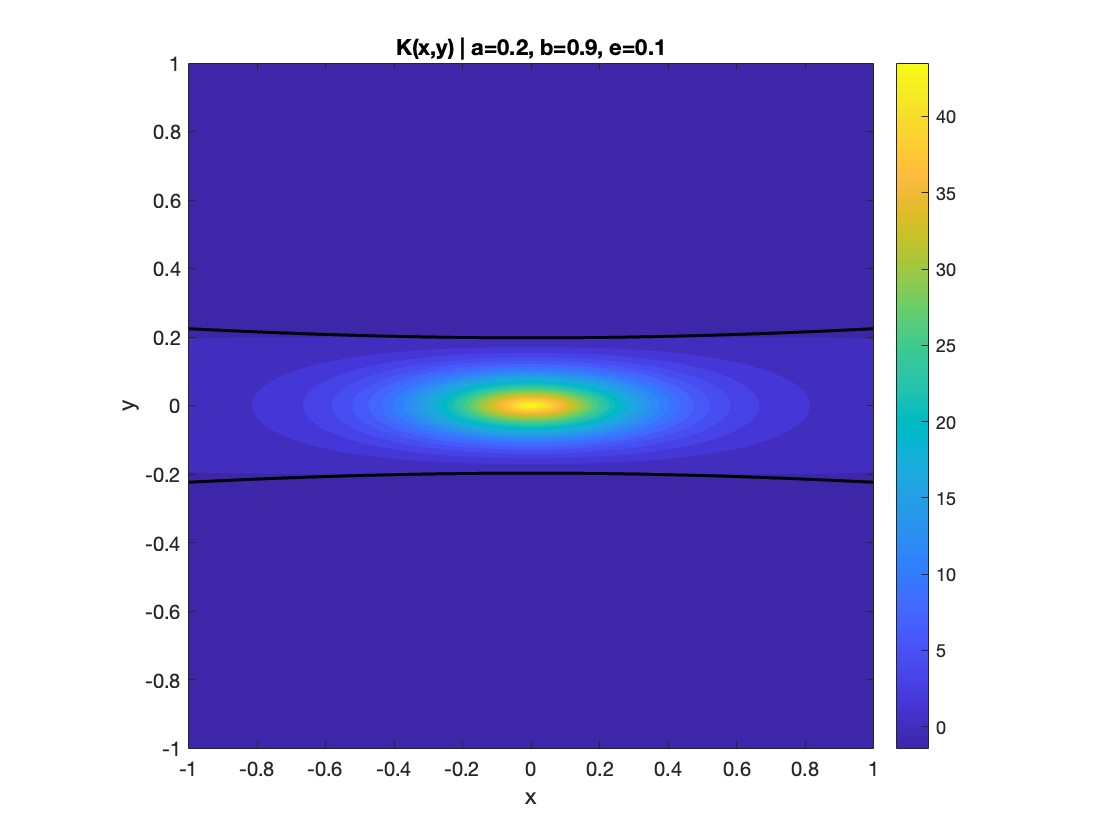}
\caption{If $\sqrt{\alpha} + \sqrt{\beta} < \sqrt{2}$, the positive curvature region is delimited by a hyperbola.}
\label{fig:ricci_hyp}
\end{figure}

\begin{figure}[h!]
\centering
\includegraphics[width=0.6\textwidth]{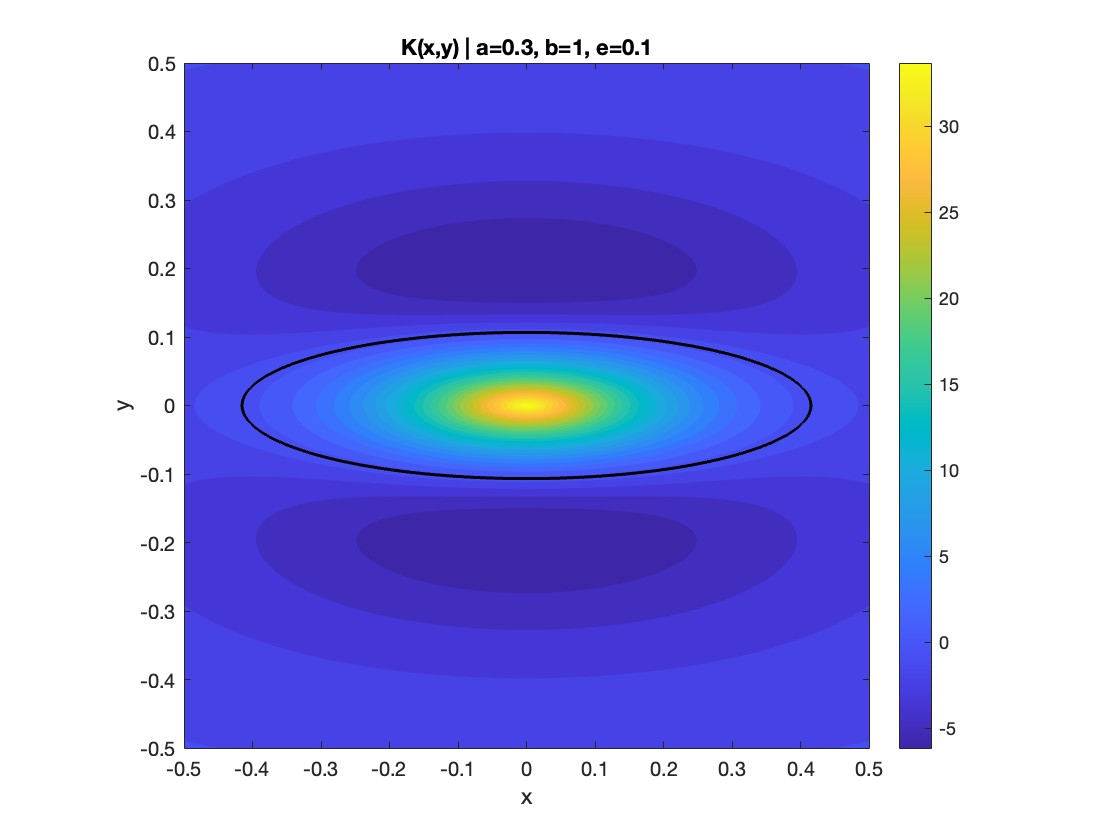} 
\caption{If $\sqrt{\alpha} + \sqrt{\beta} > \sqrt{2}$, the positive curvature region is delimited by an ellipse.}
\label{fig:ricci_ell}
\end{figure}

 \noindent Generalizing Theorem~\ref{prop:equivalenceplane} to linear electromagnetic systems in $\mathbb R^{2k}$, while important, is far from straightforward. In general, one cannot expect conclusions as sharp as in dimension two, unless the matrix defining the quadratic potential can be brought into $2\times 2$ block-diagonal form by a change of coordinates preserving the standard complex structure of $\mathbb R^{2k}$. In that case, the $2k$-dimensional problem decouples into $k$ two-dimensional ones, and Theorem~\ref{prop:equivalenceplane} applies to each block separately. We shall see in the next section that this block-diagonalization property, although restrictive, does occur for planar RE of the $n$-body problem in the presence of additional symmetries. Even when such a block-diagonalization is not available, however, one may still expect some aspects of Theorem~\ref{prop:equivalenceplane} to persist and to provide useful information on the stability properties of equilibrium points in $\mathbb R^{2k}$. While the equivalence between linear stability and the strict inequality $e_0<c_u$ is likely too much to expect (whereas the implication $e_0=c_u \Rightarrow$ linear instability is always true), the topology of the zero set of the electromagnetic curvature--possibly after restricting it to suitable $J$-invariant planes--is expected to encode relevant information on linear instability. Exploring these questions, already in dimension four, will be the subject of future work. A natural testing ground for these ideas is provided by Roberts' results on kite configurations \cite{roberts}, to which we return below.

\section{Applications to the planar $n$-body problem} \label{sec_5}

As shown in Section~\ref{sec_2.1}, RE of the $n$-body problem appear as equilibria of an electromagnetic Hamiltonian after passing to rotating coordinates. In the planar case, the angular velocity matrix generating the rotating frame is simply given by $K = k J$, with $k = \sqrt{\lambda}$. Denoting by $q_i = (x_i, y_i)$ the coordinates of each body, the magnetic potential is
\[
\vartheta_q = - M K q = k \sum_{i = 1}^n m_i (y_i\, dx_i - x_i\, dy_i),
\]
and the effective potential is
\[
V(q) = - U(q) - \tfrac12 k^2 \lvert q \rvert_M^2.
\]
In this final section, we analyze the stability of RE for the three- and four-body problems using electromagnetic curvature. By removing the symmetries of the problem and linearizing the potential at a RE, we obtain a reduced electromagnetic system with quadratic potential, to which the results of the previous sections can be applied.
The reduced Hamiltonian system is obtained using the symplectic change of coordinates discussed in Section~\ref{sec_3.2}, adapted here to the planar case (see also~\cite{meyer-schimdt}). Let $A \in \R^{2n \times 2n}$ be the $M$-orthogonal matrix commuting with $K$. The last two columns define a matrix
\[
B \in \R^{2n \times (2n - 4)},
\]
which identifies the reduced configuration space with the shape directions transverse to the symmetry directions. For $z \in \R^{2n-4}$, the configuration
\[
q = Bz
\]
represents the reduced shape coordinates in the original variables. Restricting the Hamiltonian to this subspace, the mass scalar product becomes the standard Euclidean scalar product in $\R^{2n-4}$, while the magnetic term becomes
\[
\vartheta_{\mathrm{red}}(z) = \vartheta(Bz) = k \sum_{i = 1}^{n-2}(y_i\,dx_i - x_i\,dy_i).
\]
The reduced potential is
\[
V_{\mathrm{red}}(z) = V(Bz) = -U(Bz) - \tfrac12 k^2 |z|^2.
\]
If $q_0$ is a RE, then, by construction of $B$, the corresponding shape coordinates are
\[
z_0 = B^\top M q_0 = 0.
\]
Linearizing the reduced potential at $z_0$ yields
\[
V_{\mathrm{red}}(z_0+\xi)
=
V_{\mathrm{red}}(z_0)
+
\tfrac12 \langle D^2 V_{\mathrm{red}}(z_0)\xi,\xi\rangle
+
\mathcal O(|\xi|^3),
\]
with
\[
D^2 V_{\mathrm{red}}(z_0)
=
B^\top D^2 V(q_0) B
=
- B^\top D^2 U(q_0) B - k^2 \mathbb{I}_{2n-4}.
\]
Thus, the linearized reduced dynamics is governed by an electromagnetic Hamiltonian on
$T^*\R^{2n-4} \cong \R^{4n-8}$ whose potential is quadratic. We are interested in those configurations for which the symplectic planes are invariant. In this case, the quadratic potential can be written in the form
\[
V_{\mathrm{lin}}(z) = \sum_{i = 1}^{n-2} (-\alpha_i x_i^2 - \beta_i y_i^2),
\]
where $x_i, y_i$ are coordinates spanning the symplectic planes, and $\alpha_i$ and $\beta_i$ are constants depending on the masses.

We start by analyzing RE for the 3-body problem associated with Lagrange equilateral triangle central configurations. In this case, after reduction by translations and rotations, the dynamics reduces from a Hamiltonian system on $T^* \R^6$ to one on $T^* \R^2$. Near a relative equilibrium $z_0$, the reduced Hamiltonian is determined by the quadratic approximation of the effective potential. Since the reduced configuration space has dimension two, it is always possible to choose coordinates $(x, y)$ in which the quadratic part of the potential takes the form
\[
V_\mathrm{lin}(z) = - \alpha x^2 - \beta y^2,
\]
with $\alpha$ and $\beta$ suitable constants. With this choice of coordinates, the linearized reduced dynamics is analogous to the toy model introduced in Section~\ref{sec_4.1}. The only difference lies in the magnetic term. In the toy model the magnetic potential is $\vartheta = -y\,dx + x\,dy$, while in the reduced three-body problem it becomes
\[
\vartheta_{\mathrm{red}} = k (y\,dx - x\,dy).
\]
Thus, the magnetic field strength is scaled by the factor $k$. In particular, the magnetic function becomes $b = 2k$, and the stability condition derived for the toy model rescales accordingly:
\[
\sqrt{\alpha} + \sqrt{\beta} < \sqrt{2}k.
\]

\begin{prop}
For the relative equilibrium corresponding to a Lagrange central configuration, the linear stability condition
\[
\sqrt{\alpha} + \sqrt{\beta} < \sqrt{2}\,k,
\]
is equivalent to Routh's classical criterion
\[
\frac{m_1 m_2 + m_1 m_3 + m_2 m_3}{(m_1 + m_2 + m_3)^2}
< \frac{1}{27}.
\]
Here $\alpha$ and $\beta$ are given by minus one half of the eigenvalues of the reduced Hessian of the effective potential.
\label{prop:lagrange}
\end{prop}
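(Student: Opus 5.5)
The plan is to avoid computing $\alpha$ and $\beta$ individually. Both sides of the equivalence depend only on $\alpha+\beta$ and $\alpha\beta$, i.e.\ on the trace and determinant of the $2\times 2$ reduced Hessian $D^2V_{\mathrm{red}}(z_0)=-B^\top D^2U(q_0)B-k^2\mathbb{I}_2=-2\,\mathrm{diag}(\alpha,\beta)$. Squaring the positive quantities shows that the condition $\sqrt{\alpha}+\sqrt{\beta}<\sqrt{2}\,k$ is equivalent to
\[
\alpha+\beta<2k^2 \quad\text{and}\quad 4\sqrt{\alpha\beta}<2k^2-(\alpha+\beta)\ \text{ together},
\]
exactly as in the proof of Theorem~\ref{prop:equivalenceplane}, where linear stability is characterised by $\alpha+\beta-2<0$ and $(2-\alpha-\beta)^2>4\alpha\beta$, now rescaled by $b=2k$. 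The proof therefore reduces to computing $\operatorname{tr}$ and $\det$ of $B^\top D^2U(q_0)B$ for the Lagrange configuration.

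\textbf{Step 1: the trace.} I would compute $\operatorname{tr}(M^{-1}D^2U(q_0))$ on all of $\R^6$ using the block formula $D_{ij}=\frac{m_im_j}{r_{ij}^3}(\mathbb{I}_2-3u_{ij}u_{ij}^t)$. Each block has trace $-\frac{m_im_j}{r^3}$, so with all $r_{ij}=r$ the full trace is $\sum_{i<j}\frac{(m_i+m_j)}{r^3}\cdot\frac{1}{1}$-type sums, which collapse to a multiple of $\bar m/r^3=\lambda=k^2$. I would then subtract the contributions of the symmetry directions, which are known explicitly:
\begin{itemize}
\item translations contribute eigenvalue $0$ (twice);
\item the rotation direction $Kq$ contributes $-k^2$, coming from $M^{-1}D^2U\,Kq = -k^2 Kq$ by rotational invariance;
\item the homothety direction $q$ contributes $2k^2$, by $(-1)$-homogeneity, since $D^2U\,q=-2\nabla U$.
\end{itemize}
This yields $\mu_1+\mu_2=k^2$ for the eigenvalues $\mu_i$ of $B^\top D^2U B$. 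Hence $2(\alpha+\beta)=\mu_1+\mu_2+2k^2=3k^2$, that is, $\alpha+\beta=\tfrac32 k^2<2k^2$. So the first condition always holds.

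\textbf{Step 2: the determinant.} This is the main obstacle. I would use that $M^{-1}D^2U$ is symmetric with respect to $\langle\cdot,\cdot\rangle_M$ and that the shape plane is its invariant complement. It then suffices to compute $\operatorname{tr}\big((M^{-1}D^2U)^2\big)$, subtract the known squares $0,0,k^4,4k^4$ of the symmetry eigenvalues, and obtain $\mu_1^2+\mu_2^2$, hence $\mu_1\mu_2$.

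The squared trace involves sums over pairs of blocks $D_{ij}D_{jl}$. The terms with $(\mathbb{I}-3uu^t)(\mathbb{I}-3u'u'^t)$ for equilateral directions make $u_{ij}\cdot u_{jl}=\pm\tfrac12$ appear. I expect the outcome to be $\mu_1\mu_2=k^4\big(\tfrac{27}{4}\sigma-2\big)$-type, with $\sigma=\frac{m_1m_2+m_1m_3+m_2m_3}{\bar m^2}$. The target is
\[
4\alpha\beta=\tfrac{27}{4}\,\sigma\, k^4,
\]
which makes the characteristic polynomial $\lambda^4+(4k^2-2(\alpha+\beta))\lambda^2+4\alpha\beta$ coincide with Routh's $\lambda^4+k^2\lambda^2+\tfrac{27}{4}\sigma k^4$.

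As a fallback, if the squared-trace bookkeeping is messy, I would instead verify this identity directly. That would mean choosing the explicit $M$-orthonormal shape basis (Jacobi-type coordinates) and citing the classical computation of the Lagrange linearization, e.g.\ from \cite{meyer-schimdt,moeckel}.

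\textbf{Step 3: conclusion.} With $\alpha+\beta=\tfrac32k^2$ and $\alpha\beta=\tfrac{27}{16}\sigma k^4$, the second condition reads
\[
4\sqrt{\alpha\beta}<\tfrac12k^2 \iff 16\alpha\beta<\tfrac14 k^4 \iff 27\sigma<1,
\]
which is Routh's criterion. This also matches the footnote of Theorem~\ref{prop:equivalenceplane}: the equality case $\sqrt{\alpha}+\sqrt{\beta}=\sqrt{2}\,k$ corresponds to $27\sigma=1$, the spectrally but not linearly stable threshold.
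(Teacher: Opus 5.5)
Your trace/determinant route is genuinely different from the paper's, and it does close. As written, however, it contains an arithmetic slip and leaves the decisive step unproved.

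\textbf{The slip.} Since $(\sqrt{\alpha}+\sqrt{\beta})^2=\alpha+\beta+2\sqrt{\alpha\beta}$, the condition $\sqrt{\alpha}+\sqrt{\beta}<\sqrt{2}\,k$ is equivalent to $2\sqrt{\alpha\beta}<2k^2-(\alpha+\beta)$. This is the rescaled form of the condition $(2-\alpha-\beta)^2>4\alpha\beta$ that you cite. It is not equivalent to $4\sqrt{\alpha\beta}<2k^2-(\alpha+\beta)$. In Step~3 a second error cancels the first. With $\alpha\beta=\tfrac{27}{16}\sigma k^4$ one has $16\alpha\beta=27\sigma k^4$, so $16\alpha\beta<\tfrac14k^4$ means $\sigma<\tfrac{1}{108}$, not $27\sigma<1$. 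The correct chain is $2\sqrt{\alpha\beta}<\tfrac12k^2\iff 16\alpha\beta<k^4\iff 27\sigma<1$.

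\textbf{The unproved step.} Step~2 only anticipates $\mu_1\mu_2$, and the fallback of quoting the classical Lagrange linearization simply imports Routh's computation. Your squared-trace plan does work and is short. Write $P_{ij}=\mathbb{I}_2-3u_{ij}u_{ij}^t$, so that $M^{-1}D^2U$ has off-diagonal blocks $\frac{m_j}{r^3}P_{ij}$ and diagonal blocks $-\frac{1}{r^3}\sum_{j\neq i}m_jP_{ij}$. Then $\operatorname{tr}P_{ij}^2=5$. Since the two edges at a vertex meet at $60^\circ$, $\operatorname{tr}(P_{ij}P_{il})=-4+9\cdot\tfrac14=-\tfrac74$. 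Set $\sigma_2=m_1m_2+m_1m_3+m_2m_3$. The off-diagonal blocks contribute $10\sigma_2/r^6$ and the diagonal ones contribute $\bigl(10\sum_i m_i^2-\tfrac72\sigma_2\bigr)/r^6$, so
\[
\operatorname{tr}\bigl((M^{-1}D^2U)^2\bigr)=\frac{10\bar m^2-\tfrac{27}{2}\sigma_2}{r^6}=k^4\Bigl(10-\tfrac{27}{2}\sigma\Bigr).
\]
Subtracting the squared symmetry eigenvalues $0+0+k^4+4k^4$ gives $\mu_1^2+\mu_2^2=k^4(5-\tfrac{27}{2}\sigma)$. Hence $\mu_1\mu_2=k^4(\tfrac{27}{4}\sigma-2)$, exactly as you guessed, and $\alpha\beta=\tfrac14(\mu_1+k^2)(\mu_2+k^2)=\tfrac{27}{16}\sigma k^4$. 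Positive trace and determinant also give $\alpha,\beta>0$, which you need before taking square roots.

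\textbf{Comparison.} The paper normalizes $\bar m=1$ and $q_0^\top Mq_0=1$, and builds an explicit $M$-orthonormal shape basis $v_5$, $v_6=Jv_5$. It then states the eigenvalues $-\tfrac32k^2(1\pm\sqrt{1-3\sigma})$ of the reduced Hessian (it writes $\mu$ for your $\sigma$), obtaining $\alpha$ and $\beta$ individually. Your Steps~1--2, once completed, reproduce exactly their sum $\tfrac32k^2$ and product $\tfrac{27}{16}\sigma k^4$ without constructing $B$. You use only $\operatorname{tr}$ and $\operatorname{tr}(\cdot^2)$ on $\R^6$ together with the symmetry eigenvalues $0,0,-k^2,2k^2$. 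Your route is normalization-free and makes it transparent that the criterion depends only on the reduced quartic $\lambda^4+k^2\lambda^2+\tfrac{27}{4}\sigma k^4$, i.e.\ Routh's. The paper's explicit basis additionally supplies adapted coordinates in which the toy model of Theorem~\ref{prop:equivalenceplane} applies literally.
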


\begin{proof}
Consider three masses $m_1, m_2, m_3 > 0$ at the vertices of an equilateral triangle, with positions $q_1 = (1, 0)$, $q_2 = (-\tfrac12, -\tfrac{\sqrt{3}}{2})$, and $q_3 = (-\tfrac12, \tfrac{\sqrt{3}}{2})$, and denote by $q_0 = (q_1, q_2, q_3)$. Assume, without loss of generality, that $m_1+m_2+m_3 = 1$, and set
\[
\mu = m_1 m_2 + m_1 m_3 + m_2 m_3.
\]
After translating the Lagrange central configuration so that the center of mass is at the origin and normalizing it so that $q_0^\top M q_0 = 1$, we obtain
\[
k = \sqrt{U(q_0)} = (m_1 m_2 + m_1 m_3 + m_2 m_3)^{3/4} = \mu^{3/4}.
\]

To construct the matrix $B \in \R^{6\times 2}$ describing the reduced system, we first consider the vectors $v_1 = (1, 0, 1, 0, 1, 0)$, $v_2 = (0, 1, 0, 1, 0, 1)$, $v_3 = q$, and $v_4 = J q$. The vectors $v_1$ and $v_2$ correspond to translations, while $v_3$ and $v_4$ correspond to scaling and rotation of the configuration. We then choose a vector $v_5$ that is $M$-orthonormal to the four vectors $v_1, v_2, v_3, v_4$, and define $v_6 = J v_5$. For instance, one possible choice is
\[
v_5 = \frac{1}{\sqrt{\mu}} \Big( 0, \sqrt{\frac{m_2m_3}{m_1}}, \sqrt{\frac{3m_1m_3}{4m_2}}, -\sqrt{\frac{m_1m_3}{4m_2}}, -\sqrt{\frac{3m_1m_2}{4m_3}}, -\sqrt{\frac{m_1m_2}{4m_3}} \Bigr).
\]
Finally, we define
\[
B = \begin{bmatrix}
    v_5 & v_6
\end{bmatrix},
\]
whose columns span the reduced subspace corresponding to the shape dynamics.

Let $z_0$ denote the reduced coordinates of $q_0$:
\[
z_0 = B^\top M q_0 = (0, 0)\in \R^2.
\]
The Hessian of the reduced effective potential at $z_0$ is given by
\[
D^2 V_{\mathrm{red}}(z_0) = -B^\top D^2 U(q_0) B - k^2 \mathbb{I}_2
\]
and its eigenvalues are
\[
\lambda_{1,2} = - \tfrac32 k^2 \Bigl( 1 \pm \sqrt{1 - 3 \mu} \Bigr).
\]
Observe that, under the assumption $m_1 + m_2 + m_3 = 1$, it follows that $0 < \mu \leq \tfrac{1}{3}$, and therefore the quantity $\sqrt{1 - 3\mu}$ is real. Thus, both eigenvalues are real and negative. Therefore, we have
\[
\alpha = - \tfrac{\lambda_1}{2} = \tfrac34 k^2 \Bigl( 1 - \sqrt{1 - 3 \mu} \Bigr) > 0, \qquad \beta = - \tfrac{\lambda_2}{2} = \tfrac34 k^2 \Bigl( 1 + \sqrt{1 - 3 \mu} \Bigr),
\]
so that $\alpha, \beta > 0$. By squaring both sides of the linear stability condition
\[
\sqrt{\alpha} + \sqrt{\beta} < \sqrt{2}k
\]
we obtain
\[
\sqrt{\mu} < \tfrac{1}{3 \sqrt{3}},
\]
which is precisely Routh's stability criterion for the Lagrange triangular solution. Hence the two conditions are equivalent.
\end{proof}

Next, we analyze the linear stability of the RE of the four-body problem generated by the square and rhombus central configurations. The square is a central configuration for four equal masses, while the rhombus is central when opposite masses are equal. After reduction of the translational and rotational symmetries, the reduced configuration space has dimension $2n-4=4$, and the phase space is therefore
$T^*\R^4\cong\R^8$.
In these symmetric configurations, the reduced Hessian matrix splits into two $2\times2$ blocks corresponding to the invariant symplectic planes generated by the vectors $v_i, Jv_i$ of the matrix $B$. In coordinates adapted to these planes, the quadratic potential takes the form
\[
V_{\mathrm{lin}}(z)
=
-\alpha_1 x_1^2-\beta_1 y_1^2
-\alpha_2 x_2^2-\beta_2 y_2^2 .
\]
Therefore, the curvature criterion $\sqrt{\alpha_i} + \sqrt{\beta_i} < \sqrt{2}k$ can be applied separately to each plane. For both the square and the rhombus configurations, the stability condition given by Theorem~\ref{prop:equivalenceplane} is never satisfied on either plane, and the zero set of the electromagnetic curvature restricted to each plane is an ellipse. This agrees with the results obtained by Roberts for general kite configurations; see \cite{roberts}. Motivated by these examples, we formulate the following general principle.

\begin{prop}[Stability criterion]
Let $q_0$ be a relative equilibrium of the planar $n$-body problem for which the linearized reduced Hamiltonian splits into independent blocks on invariant symplectic planes. If the curvature stability condition given in Theorem~\ref{prop:equivalenceplane} fails on one of these planes, then the relative equilibrium is linearly unstable.
\label{prop:stabilitycriterion}
\end{prop}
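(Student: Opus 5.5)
The proof reduces to the two-dimensional model. By hypothesis, the linearized reduced Hamiltonian on $T^*\R^{2n-4}$ splits into independent blocks on invariant symplectic planes. In adapted coordinates $z=(x_1,y_1,\dots,x_{n-2},y_{n-2})$, the quadratic potential is $V_{\mathrm{lin}}=\sum_i(-\alpha_i x_i^2-\beta_i y_i^2)$ and the magnetic form is $\vartheta_{\mathrm{red}}=k\sum_i(y_i\,dx_i-x_i\,dy_i)$. The reduced Hamiltonian is therefore a sum $H=\sum_i H_i$, where each $H_i$ depends only on $(x_i,y_i,p_{x_i},p_{y_i})$. The linearized matrix $L$ on $E_3$ is block diagonal with $4\times 4$ blocks $L_i$. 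Hence its spectrum is the union of the spectra of the $L_i$, and $L$ is diagonalizable if and only if every $L_i$ is.

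Next I analyze a single block. After rescaling the magnetic term by $k$ (equivalently, replacing $2$ by $2k^2$ and $4$ by $4k^2$ in the characteristic polynomial, as in the discussion before Proposition~\ref{prop:lagrange}), the characteristic polynomial of $L_i$ is
\[
\lambda^4+(4k^2-2(\alpha_i+\beta_i))\lambda^2+4\alpha_i\beta_i .
\]
Following the proof of Theorem~\ref{prop:equivalenceplane}, $L_i$ has purely imaginary simple spectrum if and only if $\sqrt{\alpha_i}+\sqrt{\beta_i}<\sqrt2\,k$. So suppose the condition fails on plane $i$. If $\sqrt{\alpha_i}+\sqrt{\beta_i}>\sqrt2 k$, then either $\alpha_i+\beta_i\ge 2k^2$ or the discriminant is negative. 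In both cases some root $\mu=\lambda^2$ is positive or non-real, so $L_i$ has an eigenvalue with nonzero real part. In the borderline case $\sqrt{\alpha_i}+\sqrt{\beta_i}=\sqrt2 k$, there is a double root $\mu<0$, and $L_i$ has a nontrivial Jordan block, as in the footnote to Theorem~\ref{prop:equivalenceplane}. In every case $L$ restricted to $E_3$ fails the definition of linear stability, so the relative equilibrium is linearly unstable.

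The argument needs $\alpha_i,\beta_i>0$, which is the setting of Theorem~\ref{prop:equivalenceplane}. If some $\alpha_i\beta_i<0$, then the product of the two roots $\mu_\pm$ equals $4\alpha_i\beta_i<0$, so one root is a positive real $\mu$ and instability follows directly. The case $\alpha_i\beta_i=0$ gives a zero eigenvalue. That case must be handled separately or excluded by assumption, noting that the residual zero modes discussed in Remark~3.2 arise only in the nonplanar setting.

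I expect the main obstacle to be bookkeeping rather than conceptual. First, one has to verify that the symplectic reduction really produces the magnetic form $k\sum_i(y_i\,dx_i-x_i\,dy_i)$ on each plane with the same $k$. Second, one has to check that the splitting hypothesis yields diagonal blocks, not merely invariant planes; this may require a rotation within each plane commuting with $J$. Since rotations in a plane commute with $J$, the magnetic term is preserved and the Hessian block can be diagonalized.
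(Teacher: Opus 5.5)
Your argument is correct and is essentially the paper's. The paper gives no separate proof: it justifies the proposition by observing that, under the splitting hypothesis, the reduced system decouples into two-dimensional blocks, each of which is Theorem~\ref{prop:equivalenceplane} with the magnetic strength rescaled by $k$, so instability of one block forces instability on $E_3$. You make exactly this explicit: through the $k$-rescaled characteristic polynomial $\lambda^4+(4k^2-2(\alpha_i+\beta_i))\lambda^2+4\alpha_i\beta_i$, the Jordan block in the borderline case, and the remark that rotations within each $J$-invariant plane diagonalize the Hessian block without changing the magnetic term.
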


The stability criterion established above, while conceptually appealing, is unfortunately hard to implement in general. Indeed, the assumption that the linearized reduced Hamiltonian splits into independent blocks on invariant symplectic planes is quite strong, and is typically satisfied only in the presence of additional symmetries. For example, among the kite configurations studied by Roberts in~\cite{roberts}, this phenomenon occurs only in the square and rhombus cases. In the general kite case, the reduced matrix still exhibits a sparse structure, but it cannot be brought into block-diagonal form by a change of coordinates preserving the complex structure.

Even in the absence of such a decomposition, however, the curvature point of view may still provide valuable information on linear stability. A natural strategy is to evaluate the electromagnetic curvature on $J$-invariant planes, even when these planes are not invariant under the linearized dynamics. This is expected to furnish at least necessary conditions for stability, and possibly sharper information in concrete examples. In particular, if the curvature test signals instability on one such plane, this should strongly suggest that the equilibrium itself is linearly unstable.
This perspective will be developed in a subsequent work. Among the cases we intend to study are: central configurations with $n \geq 5$ and a large symmetry group, where one expects the reduced system to split and the two-dimensional curvature criterion to apply directly; the kite configurations considered by Roberts, where one hopes at least to recover instability outside the ``dominant mass'' regime by means of the curvature test; and relative equilibria in the planar four-body problem arising from configurations more general than the kite ones.
One notable advantage of this approach is that it is based on a coordinate change that is far less \emph{ad hoc} than the one introduced by Roberts, since it ultimately follows the reduction procedure of Meyer and Schmidt~\cite{meyer-schimdt}. This makes the method potentially applicable in a much wider range of situations. A second advantage is numerical: instead of requiring the computation of eigenvalues together with a delicate analysis of diagonalizability, one only needs to evaluate the electromagnetic curvature via the associated two-dimensional tests.

\bibliographystyle{plainurl}
\bibliography{biblio} 

\end{document}